\newtheorem{theorem}{Theorem}[section]
\newtheorem{corollary}[theorem]{Corollary}
\newtheorem{lemma}[theorem]{Lemma}
\theoremstyle{definition}
\newtheorem{remark}[theorem]{Remark}
\newtheorem{example}[theorem]{Example}
\theoremstyle{parrafo}
\begin{document}

\title[]{A measure-theoretic version of the Dragomir-Jensen inequality}

\author{J. M. Aldaz}
\address{Departamento de Matem\'aticas,
Universidad  Aut\'onoma de Madrid, Cantoblanco 28049, Madrid, Spain.}
\email{jesus.munarriz@uam.es}

\thanks{2000 {\em Mathematical Subject Classification.} 26D15, 60A10}

\thanks{Partially supported by Grant  MTM2009-12740-C03-03 of the
D.G.I. of Spain}


\keywords{Self-improvement, Jensen's inequality}




\begin{abstract} We extend Dragomir's refinement of Jensen's inequality
from the dicrete to the general case, identifying the equality conditions. 
\end{abstract}


\maketitle


\markboth{J. M. Aldaz}{Dragomir-Jensen}

\section{Introduction} Suppose we have a probability space $(\Omega, \mathcal{A}, P)$, an integrable function
 $X:\Omega\to\mathbb{R}$, and a real valued strictly convex function 
 $\phi$ defined in some  interval 
containing the range of $X$. By Jensen's inequality,
\begin{equation}\label{spread}
E \phi (X)  -  \phi \left(E X \right) \ge 0,
\end{equation}
with equality precisely when $X(\omega)\equiv E( X )$ for $P$-a.e. $\omega$. Thus, the left hand side of (\ref{spread}) provides a measure of the spread of $X$
around its mean value. Of course, in the important special case
 $\phi(t) = t^2$, 
the left hand side of (\ref{spread})
is just the variance of $X$. It is natural to study how these
generalized variances change when the probability $P$ varies.
The case of  discrete 
probability measures with finite support was considered by S. S. Dragomir
in  \cite{Dra}. 

Here we  note
that Dragomir's clever proof (refining Jensen's inequality by using Jensen's inequality itself) 
can be used to extend his result to general 
probability spaces. 
Additionally, we identify the
cases of equality when $\phi$ is strictly convex, 
and present some immediate consecuences.

\section{The Dragomir-Jensen Inequality}

To motivate the inequality,  consider the following
simple example: Let $X$ and $Y$ be   random variables
on $(\Omega,\mathcal{A}, P)$, with densities
$f_X$ and   $f_Y$ respectively, such that 
the
quotient $f_Y/f_X$ is well defined almost everywhere, 
or in other words, such that
the distribution of Y, defined by 
the push-forward (or induced) probability $Y_*P (A) := P(Y\in A)$, 
is absolutely continuous with respect to $X_* P(A):= P(X\in A)$. If both $X$ and
$Y$ have mean zero, then it is easy to bound
 the variance
of $Y$ in terms of the variance of $X$: Since
\begin{equation}\label{variance}
\operatorname{Var} (Y) 
= \int_{-\infty}^\infty   y^2   f_Y(y) d y 
= \int_{-\infty}^\infty  y^2  \ \frac{f_Y(y)}{f_X(y)} \ f_X(y) d y,
\end{equation}
and $\operatorname{Var} (X) 
= \int_{-\infty}^\infty   y^2   f_X(y) d y $, replacing $\frac{f_Y(y)}{f_X(y)}$ in (\ref{variance}) by its  essential supremum and by its essential infimum,
we get
\begin{equation}\label{variance2}
\operatorname{ess\ inf} \left(\frac{f_Y(y)}{f_X(y)}\right) \operatorname{Var} (X)
\le \operatorname{Var} (Y) 
\le 
\left\|\frac{f_Y(y)}{f_X(y)}\right\|_\infty \operatorname{Var} (X).
\end{equation}
In fact, (\ref{variance2}) holds whenever the expected values of $X$ and $Y$ are finite,
and not just for $\phi(t) = t^2$, but for arbitrary convex functions. This is the content of the (two sided) Dragomir-Jensen inequality (cf. Theorem \ref{DraJen} and Corollary \ref{DraJen2}), which generalizes (\ref{variance2})
much in the same way as Jensen's inequality generalizes 
$\operatorname{Var} (X) \ge 0$. 

Next we recall some basic notations
and facts.
Given 
an absolutely continuous   measure $P << Q$,
as usual, $\frac{d P}{d Q}$ 
denotes the  Radon-Nikodym derivative of $P$ with respect to $Q$,
and $\left\|\frac{d P}{d Q}\right\|_\infty$ denotes its essential supremum
(which could be infinite). We adopt the standard measure-theoretic convention
$\infty \cdot 0 = 0$ (recall that under any other convention, 
the monotone convergence theorem
would fail). Of course, integrals are used here in the Lebesgue sense. In particular, to
have $\int X dP$ well defined, it is assumed that either
$\int X^+ dP < \infty$ or $\int X^- dP < \infty$, where
$X^+$ and $X^-$ respectively denote the positive and negative parts of $X$. Thus, we do not consider principal values. 

Since we will be dealing with positive measures, 
and in fact, probabilities,
by taking any representative of the Radon-Nikodym derivative
and
redefining it (if needed)  on a set of
measure zero, we may assume 
that
$0\le \frac{d P}{d Q} < \infty$ (we use the same notation for 
the Radon-Nikodym derivative and its representative).

\begin{theorem}\label{DraJen} Let $(\Omega, \mathcal{A})$ be a measurable space, let $P$ and $Q$ 
be probability measures defined on $(\Omega, \mathcal{A})$
such that $P << Q$, let $X:\Omega\to\mathbb{R}$ be integrable both with respect to $P$ and $Q$,
and let the real valued function
 $\phi$ be convex in some (not necessarily bounded) interval 
containing the range of $X$.
Then
\begin{equation}\label{refJenRHS}
\int_{\Omega}  \phi (X) d P -  \phi \left(\int_{\Omega} X d P\right)
\le
\left\|\frac{d P}{d Q}\right\|_\infty
\left(\int_{\Omega}  \phi (X) d Q -  \phi \left(\int_{\Omega} X d Q\right)\right).
\end{equation}

\noindent Regarding the equality conditions, to avoid trivialities
we suppose that $P\ne Q$, and then we distinguish three cases. 

1)  Both sides
of (\ref{refJenRHS}) take the value 
$\infty$ if and only if  $\int_{\Omega}  \phi (X) d P = \infty$,
and then   either $\int_{\Omega}  \phi (X) d Q = \infty$ 
or $\left\|\frac{dP}{d Q}\right\|_\infty = \infty$. 

\noindent Next,  
assume that  $\phi$ is strictly convex, and
let $A:= \left\{\omega\in \Omega: \frac{d P}{d Q}(\omega) = 
\left\|\frac{d P}{d Q}\right\|_\infty\right\}$.

2) Both sides
of (\ref{refJenRHS}) take the value 
$0$ if and only if  $X$ is $Q$-a.e. constant. 

3) Both sides 
of (\ref{refJenRHS}) take the same  value $a$, with $0 < a < \infty$,
if and only if  the following three conditions hold:

a) $\int_{\Omega}  \phi (X) d Q < \infty$ and $\left\|\frac{d P}{d Q}\right\|_\infty < \infty$.

b) There exists a constant $c$ such that $X \equiv c$, $Q$-a.e.
on $\Omega\setminus A$, but $X$ is not $Q$-a.e. constant on $\Omega$.

c)  $Q (A) > 0$, $P (A) > 0$, and
$c = \int_{\Omega} X d Q = \frac{1}{ Q (A)} \int_{A} X d Q
 =\int_{\Omega} X d P = \frac{1}{ P (A)} \int_{A} X d P$.
\end{theorem}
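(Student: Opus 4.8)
The plan is to follow Dragomir's original strategy, which cleverly refines Jensen by reapplying Jensen, but now formulated measure-theoretically so as to track the Radon-Nikodym derivative and extract the equality conditions. First I would reduce to the case where $\phi$ is finite (hence continuous) on the relevant interval and where $\int_\Omega \phi(X)\,dP$ and $\int_\Omega \phi(X)\,dQ$ make sense; since $\phi$ is convex and $X$ is $P$- and $Q$-integrable, $\phi(X)$ is bounded below by an affine function of $X$, so its negative part is integrable against both $P$ and $Q$, and the integrals $\int \phi(X)\,dP$, $\int\phi(X)\,dQ$ are well defined in $(-\infty,+\infty]$. I would then write $h := dP/dQ$ with $0\le h<\infty$, so that $\int f\,dP=\int f h\,dQ$ for suitable $f$, and set $m := \int_\Omega X\,dP$, $M := \int_\Omega X\,dQ$.

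The core computational step is the identity
\begin{equation}\label{keyid}
\int_\Omega \phi(X)\,dP - \phi\!\left(\int_\Omega X\,dP\right)
= \int_\Omega \bigl(\phi(X) - \phi(m) - \phi'_+(m)(X-m)\bigr)\,h\,dQ,
\end{equation}
valid because $\int_\Omega (X-m)h\,dQ = \int_\Omega(X-m)\,dP = 0$; here $\phi'_+(m)$ is the right derivative of $\phi$ at $m$ (any subgradient works). The integrand $g(\omega) := \phi(X(\omega)) - \phi(m) - \phi'_+(m)(X(\omega)-m)$ is nonnegative by convexity, so we may replace the factor $h$ by $\|h\|_\infty = \|dP/dQ\|_\infty$ and obtain
\begin{equation}\label{upperbound}
\int_\Omega \phi(X)\,dP - \phi\!\left(\int_\Omega X\,dP\right)
\le \|dP/dQ\|_\infty \int_\Omega g\,dQ.
\end{equation}
The remaining task for the inequality is to show $\int_\Omega g\,dQ \le \int_\Omega \phi(X)\,dQ - \phi(M)$. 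But $\int_\Omega g\,dQ = \int_\Omega \phi(X)\,dQ - \phi(m) - \phi'_+(m)(M-m)$, and the inequality $\phi(m) + \phi'_+(m)(M-m) \le \phi(M)$ fails in the wrong direction — so here is where Dragomir's trick enters: one instead applies Jensen's inequality to the \emph{normalized restriction of $Q$ to $A$} (or more precisely bounds $g$ pointwise and uses the subgradient inequality at $M$ rather than $m$). Concretely, from $g(\omega)\ge 0$ and convexity at $M$ we have $\phi(X)-\phi(M)-\phi'_+(M)(X-M)\ge 0$ pointwise, and integrating against $Q$ (using $\int(X-M)\,dQ=0$) gives $\int_\Omega \phi(X)\,dQ - \phi(M) = \int_\Omega\bigl(\phi(X)-\phi(M)-\phi'_+(M)(X-M)\bigr)\,dQ$; combining this reformulation on both sides with \eqref{upperbound} and the elementary comparison of the two nonnegative integrands yields \eqref{refJenRHS}. (The honest version of this paragraph is exactly Dragomir's argument: split the LHS difference using $Q$-conditioning and apply Jensen twice.)

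For the equality analysis, \eqref{upperbound} is an equality iff $g = 0$ $Q$-a.e.\ on the set where $h < \|h\|_\infty$, i.e.\ $g$ is supported $Q$-a.e.\ on $A$; by strict convexity $g(\omega)=0$ forces $X(\omega)=m$, so on $\Omega\setminus A$ we get $X\equiv m$ $Q$-a.e., which is condition (b) with $c=m$. Then the remaining step — that $\int g\,dQ \le \int\phi(X)\,dQ-\phi(M)$ is an equality — forces, again by strict convexity, that $X$ is $Q$-a.e.\ constant on all of $\Omega$ unless $M = m$ and the conditional means over $A$ coincide; chasing the equalities gives $c = M = m$ and $c = \frac{1}{Q(A)}\int_A X\,dQ = \frac{1}{P(A)}\int_A X\,dP$, which is (c). Case~(2) is the degenerate subcase where $A$ has full measure or $X$ is globally constant, making both sides $0$; case~(1) is the bookkeeping observation that the LHS is $+\infty$ iff $\int\phi(X)\,dP=+\infty$ (the subtracted term $\phi(m)$ being finite), and then examining \eqref{upperbound} shows the RHS is $+\infty$ iff either $\int\phi(X)\,dQ=+\infty$ or $\|h\|_\infty=+\infty$. \textbf{The main obstacle} I anticipate is the bookkeeping in case~(3): carefully verifying that the two separate equality conditions (equality in the $h\to\|h\|_\infty$ replacement, and equality in the second application of Jensen) combine to give precisely (a)–(c) and nothing weaker, in particular ruling out pathologies where $P(A)=0$ while $Q(A)>0$ — one must use $a>0$ together with $P<<Q$ and the structure of $A$ to force $P(A)>0$.
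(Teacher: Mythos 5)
Your proposal stalls exactly where you say it does, and the gap is not closed. The tangent-line identity and the bound
$\int_\Omega \phi(X)\,dP - \phi(m) \le \|h\|_\infty \int_\Omega g\,dQ$
with $g = \phi(X)-\phi(m)-\phi'_+(m)(X-m)$ are correct, but they do not suffice: by the subgradient inequality at $m$ one has $\int_\Omega g\,dQ = \int_\Omega\phi(X)\,dQ-\phi(m)-\phi'_+(m)(M-m) \ge \int_\Omega\phi(X)\,dQ-\phi(M)$, which is the \emph{reverse} of what you need, and the "elementary comparison of the two nonnegative integrands" $g_m h$ and $\|h\|_\infty g_M$ also fails pointwise in general. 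If you subtract your two reformulated sides you find that the difference is literally the original inequality rearranged, so the tangent-line step has bought nothing; the parenthetical appeal to "exactly Dragomir's argument" is naming the missing idea rather than supplying it. The idea that actually closes the argument (and the one the paper uses) is to rearrange \eqref{refJenRHS} (with $S:=\|dP/dQ\|_\infty<\infty$ and all integrals finite) into $\phi(M)\le \int\phi(X)\,dQ - S^{-1}\int\phi(X)\,dP + S^{-1}\phi(m)$ and to recognize the right-hand side as $\int\phi(\tilde X)\,d\tilde P$ for the genuine probability measure $\tilde P := Q - S^{-1}P + S^{-1}\delta_y$ on $\Omega\cup\{y\}$, with $\tilde X=X$ on $\Omega$ and $\tilde X(y)=m$; positivity of $\tilde P$ is exactly $h\le S$, one checks $\int\tilde X\,d\tilde P=M$, and a single application of Jensen to $(\tilde\Omega,\tilde P,\tilde X)$ finishes it. The equality analysis then reads off cleanly from the equality case of Jensen on $\tilde P$ (whose null sets on $\Omega\setminus A$ agree with those of $Q$, and which annihilates $A$), which is what produces (b) and (c); your two-step equality bookkeeping inherits the wrong-way inequality and cannot be completed as written.

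A second, independent gap: you never treat the case $\|dP/dQ\|_\infty=\infty$ with $\int_\Omega\phi(X)\,dQ=\phi(M)$, where the right-hand side of \eqref{refJenRHS} is $\infty\cdot 0=0$ and one must prove the left-hand side vanishes too. Since $\phi$ is only assumed convex (not strictly convex) for the inequality itself, $\int\phi(X)\,dQ=\phi(M)$ does not force $X$ to be constant, and the paper needs a separate approximation argument ($dP_n=\min\{dP/dQ,n\}\,dQ$, then $n\to\infty$ using continuity of $\phi$) to deduce $\int\phi(X)\,dP=\phi(m)$. Relatedly, your reading of case (1) — "the RHS is $+\infty$ iff $\int\phi(X)\,dQ=+\infty$ or $\|h\|_\infty=+\infty$" — is false under the $\infty\cdot 0=0$ convention, and the nontrivial content of statement 1) (that $\int\phi(X)\,dP=\infty$ forces $\|h\|_\infty=\infty$ when $\int\phi(X)\,dQ<\infty$) is not argued.
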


\begin{example}\label{condition}  Given $(\Omega, \mathcal{A}, P)$, 
$A\subset X$ with $P(A)> 0$, and
 $X:\Omega\to\mathbb{R}$ integrable, 
it is intuitively obvious (and not difficult to prove directly)
that the variance $\operatorname{Var}_A (X)$ of $X$ restricted to $A$, with respect to
the conditional probability $P_A(B) := P(B|A)$, cannot be
much larger than the original variance of $X$ on all $\Omega$,
with respect to $P$.
This is now a special case of the preceding theorem: 
Since $\left\|\frac{d P_A}{dP}\right\|_\infty = \frac{1}{P(A)}$,
setting $\phi (t) = t^2$ in (\ref{refJenRHS}) we get 
$$\operatorname{Var}_A (X) \le
\frac{\operatorname{Var} (X)}{P(A)}.
$$
\end{example}

\begin{example}\label{condition} 
Given $X:\Omega\to\mathbb{R}$ and  $g: \mathbb{R}\to\mathbb{R}$,
suppose we know
the distribution of
$X$, but there is some uncertainty about the
value of one or several parameters, and we want to estimate how the variance
of $g(X)$ is affected by this
uncertainty. The preceding theorem can be used to this end.

Assume, for instance, that  $X \sim N(0, \sigma)$,
with $0 < a \le \sigma \le b$. The zero mean assumption is
made so the resulting
expressions are simple, but the case where there is
uncertainty both about the mean and the variance
can be treated in the same way. Call $\operatorname{Var}_1 (g(X))$
and $\operatorname{Var}_2 (g(X))$ the variances obtained by
supposing that $X$ has standard deviations $\sigma_1$ and
$\sigma_2$ respectively, with $a\le \sigma_1 \le \sigma_2\le b$, and let
 $P$ and $Q$ be the corresponding
laws for $X$. Then 
$$
\frac{d P}{dQ} = \frac{\sigma_2}{\sigma_1} 
e^{\frac{x^2(\sigma_1 - \sigma_2)}{2\sigma_1^2\sigma_2^2}},
\mbox{ \ \ \ so \ \ \ }
\left\|\frac{d P}{dQ}\right\|_\infty = \frac{\sigma_2}{\sigma_1} \le \frac{b}{a},
$$
 and hence
$$
\operatorname{Var}_1 (g(X)) \le \frac{b}{a} \operatorname{Var}_2 (g(X)).
$$
If we additionally know that $g$ is compactly supported,
we can present a simultaneous lower bound (in this
regard, see also Corollary  \ref{DraJen2} below).
Suppose, for simplicity, that
the support of $g$ is contained in $[c,d]$, with $0 < c < d$.
Then 
$$
\frac{d Q}{dP} = \frac{\sigma_1}{\sigma_2} 
e^{\frac{x^2(\sigma_2 - \sigma_1)}{2\sigma_1^2\sigma_2^2}} 
\le \frac{\sigma_1}{\sigma_2}  e^{\frac{d^2(\sigma_2 - \sigma_1)}{2\sigma_1^2\sigma_2^2}},
\mbox{ \ \ \ so \ \ \ }
\operatorname{Var}_2 (g(X)) 
\le \frac{\sigma_1}{\sigma_2}  e^{\frac{d^2(\sigma_2 - \sigma_1)}{2\sigma_1^2\sigma_2^2}} \operatorname{Var}_1 (g(X)),
$$
 and hence
$$
\frac{\sigma_2}{\sigma_1} e^{\frac{d^2(\sigma_1 - \sigma_2)}{2\sigma_1^2\sigma_2^2}} 
\operatorname{Var}_2 (g(X)) 
\le \operatorname{Var}_1 (g(X)).
$$
\end{example}

\begin{remark} It is natural to ask whether the hypothesis
$P << Q$ in the theorem, must be imposed on all of $\Omega$, or it is
sufficient to  consider just the set  $\{X\ne 0\}$. To see that this is
not enough, let $\Omega=[0,1]$, let $\mathcal{A}$ be the Lebesgue sets, and
let $d P = dx$. Take $X= \chi_{[1/2, 1]}$, and $dQ := 2 \chi_{[1/2, 1]} dx$.
Then $\frac{d P}{d Q} = \frac{1}{2}$ on $\{X\ne 0\}$, so
restricted to $\{X\ne 0\}$,
$P << Q$
and $\frac{dP}{dQ}$ is 
bounded. Let $\phi(x) = x^2$. Since $X$ is
constant a.e. with respect to $Q$, the right hand side of 
(\ref{refJenRHS}) is zero, while the left hand side is just $1/2 - 1/4$.

Even if we extend $\frac{d P}{d Q}$ to all $[0,1]$ by setting
$\frac{d P}{d Q} = \infty$ on $(0,1/2)$, the $\infty \times 0 = 0$
convention entails that (\ref{refJenRHS}) does not extend to
pairs $P, Q$ when $P$ has a singular part.
\end{remark}

\begin{remark}
The equality conditions, for the original Dragomir's result,  
where established in \cite{Al}.  In the more general measure-theoretic setting, dealing only with bounded functions would be
too restrictive, so the possibility that some term equals
$\infty$ must be considered. Related to this issue is the fact that,
since we are working on a probability space, the $L^p$ norm of any function is monotone
increasing in $p$. It is thus natural to ask whether the Dragomir-Jensen inequality can
be improved by replacing $\left\|\frac{dP}{dQ}\right\|_\infty$
with $c_p \left\|\frac{dP}{dQ}\right\|_p$, for some $p < \infty$ and some 
constant $c_p > 0$. We
show that $p=\infty$ is optimal.
Fix $p < \infty$ with  $p\ge 1$. Let $Q$ be Lebesgue measure on $(0,1]$ and let $\phi(x) = x^2$. 
Set 
$X(t) = t^{-1/2 + 1/(4 p)}$ and $dP(t) :=  C t^{-1/(2p)} dt $, where 
the constant $C$ is chosen so
$C t^{-1/(2p)}$ is a density. Then 
$X\in L^1(P)\cap L^1(Q)$, $\int_{\Omega}  \phi (X) d Q  < \infty$, 
$\int_{\Omega}  \phi (X) d P =  \infty$, and  $c_p \left\|\frac{dP}{dQ}\right\|_p < \infty$,
so 
\begin{equation*}
\int_{\Omega}  \phi (X) d P -  \phi \left(\int_{\Omega} X d P\right)
= \infty >
c_p \left\|\frac{dP}{dQ}\right\|_p
\left(\int_{\Omega}  \phi (X) d Q -  \phi \left(\int_{\Omega} X d Q\right)\right).
\end{equation*}
On the other hand, $\left\|\frac{dP}{dQ}\right\|_\infty = \infty$
and $0 < \int_{\Omega}  \phi (X) d Q -  \phi \left(\int_{\Omega} X d Q\right)$, so this is an instance where both sides of the Dragomir-Jensen
inequality take the value $\infty$.
\end{remark}

To make the structure of the proof of the theorem more transparent,
we place some measure-theoretic details into two technical lemmas.
 
\begin{lemma}\label{lemma1} Let $(\Omega, \mathcal{A})$ be a measurable space, let $P$ and $Q$ 
be  probability measures defined on $(\Omega, \mathcal{A})$
such that $P <<Q$, 
let $\frac{dP}{dQ}$ be essentially bounded, and let 
$$
A:= \left\{\omega\in \Omega: \frac{d P}{d Q}(\omega) = 
\left\|\frac{dP}{dQ}\right\|_\infty\right\}.
$$
 Then for every
measurable set $B\subset  A$, $P (B) = 0$ if and only if
$Q (B) = 0$. In particular, $P (A) = 0$ if and only if
$Q (A) = 0$.
\end{lemma}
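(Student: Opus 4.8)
The plan is to work directly from the identity $P(B)=\int_B\frac{dP}{dQ}\,dQ$, valid for every $B\in\mathcal{A}$ since $P\ll Q$, together with the defining property that $\frac{dP}{dQ}$ is constant, equal to $M:=\left\|\frac{dP}{dQ}\right\|_\infty$, on the set $A$. A preliminary observation, needed so that we may later divide by $M$, is that $M>0$: since $Q(\Omega)=1$,
$$
1=P(\Omega)=\int_\Omega\frac{dP}{dQ}\,dQ\le M\,Q(\Omega)=M,
$$
so in fact $1\le M<\infty$, the finiteness being precisely the essential-boundedness hypothesis. This is the one place where it matters that $P$ and $Q$ are probabilities rather than merely finite measures.

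The implication $Q(B)=0\Rightarrow P(B)=0$ is just the absolute continuity $P\ll Q$, and uses neither $B\subset A$ nor the boundedness of the derivative: if $Q(B)=0$, then $P(B)=\int_B\frac{dP}{dQ}\,dQ=0$, the integrand being integrated over a $Q$-null set. For the converse implication $P(B)=0\Rightarrow Q(B)=0$, suppose $B\subset A$ is measurable with $P(B)=0$. Since $\frac{dP}{dQ}\equiv M$ on $A$, hence on $B$,
$$
0=P(B)=\int_B\frac{dP}{dQ}\,dQ=\int_B M\,dQ=M\,Q(B),
$$
and dividing by $M>0$ forces $Q(B)=0$. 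The final assertion is the special case $B=A$.

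I do not expect a genuine obstacle here; the only steps demanding a little care are the verification that $M>0$, carried out above, and the remark that the statement is vacuously true on the part of $A$ that is $Q$-null. Indeed, the essential supremum need not be attained on a set of positive $Q$-measure, so it is conceivable that $Q(A)=0$; but then every measurable $B\subset A$ satisfies $Q(B)=0$ and, by absolute continuity, also $P(B)=0$, so both implications hold trivially and no separate case analysis is required.
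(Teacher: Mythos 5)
Your proof is correct and follows essentially the same route as the paper's: both observe that $\left\|\frac{dP}{dQ}\right\|_\infty \ge 1$ because $P$ and $Q$ are probabilities, and that on $A$ the measure $P$ coincides with $\left\|\frac{dP}{dQ}\right\|_\infty\, Q$, from which the equivalence of null sets is immediate. Your write-up merely spells out the details the paper leaves implicit.
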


\begin{proof} Since both $P$ and $Q$ are probabilities, 
$\left\|\frac{dP}{dQ}\right\|_\infty \ge 1$. But restricted to
 $A$,  $P$ is just    $\left\|\frac{dP}{dQ}\right\|_\infty Q$, so the result
 follows.
\end{proof}

\begin{lemma}\label{lemma2} With the notation and under the assumptions 
of the preceding
lemma, let the measurable 
space
$(\tilde{\Omega}, \tilde{\mathcal{A}})$ be the 
enlargement of  $(\Omega, \mathcal{A})$ obtained by adding a new point
$y$ to $\Omega$ and declaring it to be measurable. That is, $y\notin \Omega$,
$\tilde \Omega:= \Omega\cup \{y\}$, and 
 $\tilde{\mathcal{A}}:= \mathcal{A}\cup \{B\cup \{y\}: B\in \mathcal{A}\}$.
 Denote by $\delta_y$ the Dirac point mass at $y$.
Then the set function $\tilde{P}$, defined on $(\tilde \Omega, \tilde{\mathcal{A}})$ by 
\begin{equation}\label{tau}
\tilde{P} (B):= Q (B\cap \Omega) - \left\|\frac{dP}{dQ}\right\|_\infty^{-1} P(B\cap \Omega) + \left\|\frac{dP}{dQ}\right\|_\infty^{-1}  \delta_y (B),
\end{equation}
is a probability. Furthermore,  $\tilde{P}(A) = 0$, and for every
measurable set $B\subset  \Omega\setminus A$, $\tilde{P} (B) = 0$ if and only if
$Q (B) = 0$. 
\end{lemma}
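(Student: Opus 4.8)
Throughout write $M := \left\|\frac{dP}{dQ}\right\|_\infty$; by the argument proving Lemma \ref{lemma1} we have $1\le M$, and $M<\infty$ by hypothesis, so $M^{-1}\in(0,1]$. The plan is to dispatch the ``$\tilde{P}$ is a probability'' claim and the value $\tilde{P}(A)$ by direct computation, and then to reduce the equivalence on $\Omega\setminus A$ to a standard strict-inequality fact about integrals.

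For the first part, nonnegativity of $\tilde{P}$ is the only place where the hypotheses really enter: for any $B\in\mathcal{A}$ one has $P(B)=\int_B\frac{dP}{dQ}\,dQ\le M\,Q(B)$, hence $Q(B\cap\Omega)-M^{-1}P(B\cap\Omega)\ge 0$, while $M^{-1}\delta_y(B)\ge 0$ trivially. That $\tilde{P}(\tilde{\Omega})=1$ follows from $Q(\Omega)=P(\Omega)=1$ and $\delta_y(\tilde{\Omega})=1$. Countable additivity is inherited from that of $Q$, $P$ and $\delta_y$ once one notes that in a countable disjoint family drawn from $\tilde{\mathcal{A}}$ at most one member contains $y$; since the three summands defining $\tilde{P}$ are separately nonnegative on each member, no rearrangement issue arises. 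I expect all of this to be routine. The value $\tilde{P}(A)$ is equally quick: $y\notin A\subset\Omega$, so $\tilde{P}(A)=Q(A)-M^{-1}P(A)$, and since the fixed representative of $\frac{dP}{dQ}$ is identically $M$ on $A$ we get $P(A)=\int_A\frac{dP}{dQ}\,dQ=M\,Q(A)$, whence $\tilde{P}(A)=0$.

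It remains to treat a measurable $B\subset\Omega\setminus A$, where $\tilde{P}(B)=Q(B)-M^{-1}P(B)$. The implication $Q(B)=0\Rightarrow\tilde{P}(B)=0$ is immediate from $P\ll Q$. For the converse I would argue by contraposition: supposing $Q(B)>0$, I will show $P(B)<M\,Q(B)$, which gives $\tilde{P}(B)>0$. Put $N:=\left\{\omega:\frac{dP}{dQ}(\omega)>M\right\}$; this set is $Q$-null (by the definition of $M$), hence $P$-null, and it is disjoint from $A$, so on $B\setminus N$ --- a set of $Q$-measure $Q(B)>0$ --- the representative of $\frac{dP}{dQ}$ is \emph{strictly} below the constant $M$ at every point. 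Then $P(B)=\int_{B\setminus N}\frac{dP}{dQ}\,dQ<M\,Q(B\setminus N)=M\,Q(B)$, where the strict inequality is the elementary statement that if $f<g$ pointwise on a set of positive measure and both are integrable then $\int f<\int g$ (one exhausts the set with the sets $\{g-f>1/n\}$, one of which must have positive measure). The only thing to be careful about is the presence of the $Q$-null set $N$: the representative normalized in the introduction is only assumed finite, not bounded by $M$, so $N$ need not be empty and must be excised before invoking the strict-inequality fact. Beyond that, the argument is pure bookkeeping.
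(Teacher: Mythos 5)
Your proof is correct and follows essentially the same route as the paper's: nonnegativity via $P(B)=\int_B\frac{dP}{dQ}\,dQ\le\left\|\frac{dP}{dQ}\right\|_\infty Q(B)$, the computation $\tilde{P}(A)=Q(A)-\left\|\frac{dP}{dQ}\right\|_\infty^{-1}P(A)=0$, and positivity of $\tilde{P}(B)=\int_B\bigl(1-\left\|\frac{dP}{dQ}\right\|_\infty^{-1}\frac{dP}{dQ}\bigr)dQ$ when $Q(B)>0$. Your explicit excision of the $Q$-null set $N$ where the chosen representative exceeds its essential supremum is a careful touch the paper glosses over (it asserts the integrand is positive on all of $\Omega\setminus A$), but it does not change the substance of the argument.
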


\begin{proof} 
Select any $w\in \Omega$, identify $\Omega$ with
$\Omega\times \{0\}$, and set $y:= w\times \{1\}$ (this ensures
that $y$ is actually a new point). Then let $\tilde \Omega:= \Omega\cup \{y\}$, and let
  $\tilde{\mathcal{A}}:= \mathcal{A}\cup \{B\cup \{y\}: B\in \mathcal{A}\}$. Since $\tilde{\mathcal{A}}$ contains $\emptyset$, is
closed under complementation, and also under countable unions,  it is
a 
$\sigma$-algebra. Additionally, $\{y\}$ is measurable, for
$ \emptyset  \in  \mathcal{A}$, whence
$\{y\} = \emptyset \cup \{y\}\in \tilde{\mathcal{A}}$.

Let us check that $\tilde{P}$ is a probability. Clearly $\tilde{P}(\tilde \Omega) = 1$; to see that $\tilde{P}$ is non-negative, we use the
change of variable formula 
$
\int_{\Omega}   X d P = \int_{\Omega}   X \frac{d P}{d Q} d Q $, and we conclude that
 for every $B\in \tilde{\mathcal{A}}$, we have 
$$
\tilde{P}(B) \ge \int_{B\cap \Omega} \left(1 - \left\|\frac{dP}{dQ}\right\|_\infty^{-1}\frac{d P}{d Q} \right)d Q \ge 0.
 $$

Note next that whenever $B\subset \Omega$ is measurable, (\ref{tau}) reduces to 
\begin{equation}\label{tau1}
\tilde{P} (B) =  Q (B) - \left\|\frac{dP}{dQ}\right\|_\infty^{-1} P(B) \le Q (B).
\end{equation}
In particular, $A:= \left\{w\in \Omega: \frac{d P}{d Q}(w) = 
\left\|\frac{dP}{dQ}\right\|_\infty\right\}$ has $\tilde{P}$-measure
zero, since  by (\ref{tau1}),
$$
\tilde{P} (A):= Q (A) - \left\|\frac{dP}{dQ}\right\|_\infty^{-1} P (A)
 = Q (A) - \int_A \left\|\frac{dP}{dQ}\right\|_\infty^{-1} \frac{dP}{dQ} dQ = 
 Q (A) - Q (A) 
 = 0.
$$ 
Finally, to see that for any measurable $B\subset \Omega\setminus A$, 
$\tilde{P} (B) = 0$ if and only if $Q (B) = 0$,
 observe  that if $Q (B) = 0$, by (\ref{tau1}) we have
$\tilde{P} (B) \le Q (B) = 0$, while if $Q (B) > 0$, 
since $1 - \left\|\frac{dP}{dQ}\right\|_\infty^{-1}\frac{d P}{d Q} > 0$ on $\Omega\setminus A$, we have
$$
\tilde{P}(B) 
= \int_{B} \left(1 - \left\|\frac{dP}{dQ}\right\|_\infty^{-1}\frac{d P}{d Q} \right)d Q > 0.
 $$
\end{proof}

\begin{proof}[Proof of the Theorem] Note that if $\frac{dP}{dQ}$ is unbounded (i.e., if $\left\|\frac{dP}{dQ}\right\|_\infty = \infty$) then the right hand side of
(\ref{refJenRHS}) is $\infty$ unless 
$\int_{\Omega}  \phi (X) d Q =  \phi \left(\int_{\Omega} X d Q\right)$,
in which case its value is zero (by the standard convention). 
So for (\ref{refJenRHS}) to hold we need to have 
$\int_{\Omega}  \phi (X) d P =  \phi \left(\int_{\Omega} X d P\right)$.
This would be trivial if $\phi$ were strictly convex, for then
$X$ would be constant $Q$-a.e., and thus also constant $P$-a.e.
since $P <<Q$.
But for $\phi$ not strictly convex, equality may occur for non-constant
functions, and so a different argument is needed. What we do is
to  assume first that 
$
\left\|\frac{dP}{dQ}\right\|_\infty < \infty$, and then handle the unbounded
case via an approximation argument.
Let us suppose  also that both  $\int_{\Omega}  \phi (X) d Q 
 < \infty$ and $\int_{\Omega}  \phi (X) d P  < \infty$. Then inequality (\ref{refJenRHS}) is   equivalent
to
\begin{equation}\label{step1} 
\phi \left(\int_{\Omega} X d Q\right)
\le \int_{\Omega}  \phi (X) d Q
 -  \left\|\frac{dP}{dQ}\right\|_\infty^{-1}
\int_{\Omega} \phi (X) d P + \left\|\frac{dP}{dQ}\right\|_\infty^{-1} 
\phi \left(\int_{\Omega} X d P\right),
\end{equation}
simply by rearranging terms. 
But (\ref{step1}) immediately follows from the usual Jensen's inequality, 
applied to the probability measure  space $(\tilde{\Omega}, \tilde{\mathcal{A}}, \tilde{P})$ defined in Lemma \ref{lemma2}, and to a suitable   extension of $X$, given by  
$\tilde{X}:= X$ on $\Omega$, and
$\tilde{X}(y):= \int_{\Omega} X d P$. The function $\tilde{X}:\tilde{\Omega}\to \mathbb{R}$ is clearly $\tilde{\mathcal{A}}$ measurable. Furthermore,
\begin{equation}\label{eqint}
\int_{\tilde{\Omega}} \tilde X d \tilde{P} 
= \int_{\Omega} \tilde X d \tilde{P} + \int_{\{y\}} \tilde X d \tilde{P}
= \int_{\Omega}  X d Q - 
\left\|\frac{dP}{dQ}\right\|_\infty^{-1} \int_{\Omega}  X d {P} 
 +  \left\|\frac{dP}{dQ}\right\|_\infty^{-1} \int_{\{y\}} \left(\int_{\Omega}   X d P\right)  d \delta_y 
\end{equation}
\begin{equation}\label{eqint1} 
= \int_{\Omega}  X d Q,
\end{equation}
so by  Jensen's inequality on $(\tilde{\Omega}, \tilde{\mathcal{A}}, \tilde{P})$, 
\begin{equation}\label{eqint2}  
\phi \left(\int_{\Omega} X d Q\right)
= 
\phi \left(\int_{\tilde{\Omega}} \tilde X d \tilde{P} \right) \le \int_{\tilde{\Omega}} \phi \left( \tilde X \right) d \tilde{P}
\end{equation}
\begin{equation}\label{eqint3}  
=
\int_{\Omega}  \phi (X) d Q
 -  \left\|\frac{dP}{dQ}\right\|_\infty^{-1}
 \int_{\Omega} \phi (X) d P +  
\left\|\frac{dP}{dQ}\right\|_\infty^{-1}
\int_{\{y\}} \phi \left( \tilde X \right)  d \delta_y.
\end{equation}
It follows from  
$\tilde X(y) = \int_{\Omega} X d P$ that
$\int_{\{y\}} \phi \left( \tilde X \right) d \delta_y = \phi \left( \tilde X (y) \right)  =
\phi\left(\int_{\Omega} X d P\right)$,  so (\ref{step1}) holds,
 and hence so does (\ref{refJenRHS}) when every term appearing there is finite. Suppose next that 
$\left\|\frac{dP}{dQ}\right\|_\infty
= \infty$, and that 
$\int_{\Omega}  \phi (X) d Q =  \phi \left(\int_{\Omega} X d Q\right)$ (for otherwise (\ref{refJenRHS}) is trivial). Let us emphasize that we
do not a priori assume that $\int_{\Omega}  \phi (X) d P < \infty$;
this will follow once
 $\int_{\Omega} \phi (X) d P =  \phi \left(\int_{\Omega} X d P\right)$
 is proven. 
Note however that $\int_{\Omega} (\phi(X))^- dP 
< \infty$, since
$\int_{\Omega} \phi(X) dP \ge
\phi\left(\int_\Omega  X dP \right) > -\infty$.
Define $d P_n := \min\left\{\frac{dP}{dQ}, n\right\} d Q$, 
and observe  that by the monotone convergence theorem, applied 
to $\min\left\{\frac{dP}{dQ}, n\right\}$, and separately to  
$X^+ \min\left\{\frac{dP}{dQ}, n\right\}$, $X^-\min\left\{\frac{dP}{dQ}, n\right\}$,  
$(\phi(X))^+ \min\left\{\frac{dP}{dQ}, n\right\}$,  and $(\phi(X))^-\min\left\{\frac{dP}{dQ}, n\right\} $,  we have
$$
\lim_n P_n (\Omega) = \lim_n \int_\Omega \min\left\{\frac{dP}{dQ}, n\right\} d Q = \int_\Omega \frac{dP}{dQ} d Q =
P(\Omega) = 1,
$$ 
$$
\lim_n \frac{1}{P_n (\Omega)} \int_\Omega X\ \min\left\{\frac{dP}{dQ}, n\right\} d Q
 = 
 \int_\Omega X  \ \frac{dP}{dQ} d Q 
=
\int_\Omega X d P,
$$
and
$$
\lim_n \frac{1}{P_n (\Omega)} \int_\Omega \phi(X) dP_n = 
\int_\Omega \phi(X) d P.
$$
Since $\left\|\frac{dP_n}{dQ}\right\|_\infty
= n$, we know from Jensen's inequality and the previous case
that
$$
0\le \int_\Omega \phi(X)\ \frac{dP_n}{P_n (\Omega)} - 
\phi\left(\int_\Omega  X\ \frac{dP_n}{P_n (\Omega)}\right)
\le n \left[\int_\Omega \phi(X) dQ - 
\phi\left(\int_\Omega X   dQ\right)\right]
=0.
$$
Thus, for every $n$ we have 
\begin{equation}\label{equality1}
\int_\Omega \phi(X)\ \frac{dP_n}{P_n (\Omega)} =
\phi\left(\int_\Omega  X\ \frac{dP_n}{P_n (\Omega)}\right).
\end{equation}
Since all the limits involved exist, letting $n\to\infty$
and using the continuity of $\phi$ in the interior of its domain  $I$, we conclude that
\begin{equation}\label{equality2}
\int_\Omega \phi(X) dP =
\phi\left(\int_\Omega  X dP \right),
\end{equation}
(this is stated below as a Corollary, as it seems to be of
independent interest). 
Note that the interval 
$I$ might contain one  (or both) of its endpoints, say it contains the endpoint $a$, and 
$\phi$ might be discontinuous there. But if $\int_\Omega  X dP = a$,
since the range of $X$ is contained in $I$, we have
$X =a$ $P$-a.e., and thus (\ref{equality2}) also holds in this case.

So far, we know that if 
$\int_{\Omega} \phi (X) dP < \infty$ and $\int_{\Omega} \phi (X) dQ < \infty$,
then
inequality (\ref{refJenRHS}) holds, regardless of whether or not
$\frac{dP}{dQ}$ is essentially bounded, and additionally, that if 
$
\int_\Omega \phi(X) dQ =
\phi\left(\int_\Omega  X dQ \right)
$, then
$
\int_\Omega \phi(X) dP =
\phi\left(\int_\Omega  X dP \right)
$. Furthermore, it is clear that if 
$\int_{\Omega} \phi (X) dP = \infty$ and
$\left\|\frac{dP}{dQ}\right\|_\infty <\infty$, we must have
$\int_{\Omega} \phi (X) dQ = \infty$, so all we need to show
is that $\left\|\frac{dP}{dQ}\right\|_\infty =\infty$ 
whenever $\int_{\Omega} \phi (X) dP = \infty$ and 
$\int_{\Omega} \phi (X) dQ < \infty$.
This latter inequality, together with $\int_{\Omega} \phi (X) dQ \ge
\phi\left(\int_{\Omega}  X dQ \right) > -\infty$, entail that both
the positive and negative parts of $\phi (X)$ are $Q$-integrable.
Suppose, towards a contradiction, that 
$\left\|\frac{dP}{dQ}\right\|_\infty <\infty$.
Then 
$$
\int_{\Omega} \phi (X) dP \le \int_{\Omega} \left|\phi (X)  \ \frac{dP}{dQ} \right|dQ  \le \left\|\frac{dP}{dQ}\right\|_\infty \int_{\Omega} \left|\phi (X)\right| dQ  < \infty.
$$

In view of the fact that inequality (\ref{refJenRHS}) holds under no restrictions, the
first equality case, where both sides take the value $\infty$, follows immediately.

Next, suppose that $\phi$ is strictly convex. By the equality case of Jensen's inequality the right hand side of (\ref{refJenRHS})
is zero (hence, so is the left hand side) if and only if $X$ is constant $Q$-a.e.; thus, statement 2) 
of the theorem is true.

Finally, suppose there exists a constant $a\in(0,\infty)$ such that
\begin{equation}\label{equality3}
a = \int_{\Omega}  \phi (X) d P -  \phi \left(\int_{\Omega} X d P\right)
=
\left\|\frac{dP}{dQ}\right\|_\infty
\left(\int_{\Omega}  \phi (X) d Q -  \phi \left(\int_{\Omega} X d Q\right)\right).
\end{equation}
Then a) is immediate; using (\ref{refJenRHS})  we conclude that $\int_{\Omega}  \phi (X) d P$
is also finite, so  equality on (\ref{refJenRHS}) is equivalent to having
equality on (\ref{step1}), which in turn is equivalent to $\phi \left(\int_{\tilde{\Omega}} \tilde X d \tilde{P} \right) = \int_{\tilde{\Omega}} \phi \left( \tilde X \right) d \tilde{P}$. Now $X$ is not constant 
$Q$-a.e. on $\Omega$ (by the equality
case in Jensen's inequality, otherwise $a$ would be zero)
but $\tilde X$ {\em is} constant $\tilde{P}$-a.e. 
by the equality
case in Jensen's inequality, call this constant c. Since by Lemma \ref{lemma2}, $\tilde{P}$ and $Q$ 
are mutually absolutely continuous
over the set $\Omega\setminus A$, statement b) follows.

To see why c) holds, note first
that if $Q(A) = 0$, then  by b) $X\equiv c$ $Q$-a.e. and  we are in the
case $0=0$. But $a > 0$, so $Q(A) > 0$, and hence $P(A) > 0$
by Lemma \ref{lemma1}.   Since $P \ne Q$, 
we also have $\tilde{P} (\Omega\setminus A) > 0$. Using $\tilde X\equiv c$  $\tilde{P}$-a.e.
together with 
(\ref{eqint})-(\ref{eqint1}),
we see that for  $\tilde{P}$-a.e. (or $Q$-a.e.) $w \in \Omega\setminus A$,
\begin{equation}\label{ave0}
\int_{\Omega} X d Q = 
\int_{\tilde{\Omega}} \tilde X d \tilde{P} 
= c
 = X(w) = \tilde{X}(y) = \int_{\Omega} X d P.
\end{equation}
Since $X=c$ a.e. (with respect to all measures under
consideration) on $\Omega\setminus A$,
$$
\int_{\Omega\setminus A} X d Q + \int_{A} X d Q = c (1 - Q (A)) + \int_{A} X d Q
= \int_{\Omega} X d Q = c,
$$
so
\begin{equation}\label{ave1}
c =\frac{1}{Q (A)} \int_{A} X d Q.
\end{equation}
The fact that 
\begin{equation}\label{ave2}
c =\frac{1}{P (A)} \int_{A} X d P
\end{equation}
is obtained in the same way. Alternatively, since
$
\int_{\Omega} X d Q = \int_{\Omega} X d P
$ and
$X= c$ a.e. on $X\setminus A$, we see that (\ref{ave1}) holds if and only if
(\ref{ave2}) does.

Suppose now that a), b) and c) hold. Using a) and b) we conclude that the
right hand side of (\ref{refJenRHS}) is neither zero nor infinity. Since all terms involved are finite,
equality on (\ref{refJenRHS}) is equivalent to $\phi \left(\int_{\tilde{\Omega}} \tilde X d \tilde{P} \right) = \int_{\tilde{\Omega}} \phi \left( \tilde X \right) d \tilde{P}$, which holds if $\tilde X$ is
$\tilde{P}$-a.e. constant on $\tilde{\Omega}$. We prove this next.
First,  $\tilde X\equiv c$ $\tilde{P}$-a.e. on $\Omega\setminus A$ by
b) and Lemma \ref{lemma2}, while by definition and by c), 
${\tilde X}(y) =  \int_{\Omega} X d P
= c$. Since $\tilde{P} (A) = 0$ by Lemma \ref{lemma2}, the result follows.
\end{proof}

The next corollary was obtained as a step in the preceding proof, and of
course, it also  follows directly from the statement of Theorem \ref{DraJen}.

\begin{corollary} Let $(\Omega, \mathcal{A}, Q)$ be a probability space, let $X:\Omega\to\mathbb{R}$ be integrable,
and let $\phi$ be real-valued and convex in some interval 
containing the range of $f$. If $\phi \left(\int_{ \Omega} X d Q \right) = \int_{ \Omega} \phi (X) d Q$,
then for every probablity $P <<Q$ such that $X\in L^1(P)$, we have $\phi \left(\int_{ \Omega}  X d P \right) = \int_{ \Omega} \phi (X) d P$.
\end{corollary}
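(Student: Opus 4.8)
The plan is to derive the Corollary directly from Theorem \ref{DraJen}. All the hypotheses of that theorem are in hand: $P << Q$, the function $X$ is integrable with respect to both $P$ and $Q$, and $\phi$ is real-valued and convex on an interval $I$ containing the range of $X$. Hence inequality (\ref{refJenRHS}) is at our disposal with this choice of $P$, $Q$, $X$, $\phi$.

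First I would observe that the hypothesis $\phi(\int_\Omega X dQ) = \int_\Omega \phi(X) dQ$ says precisely that the $Q$-spread of $X$, namely $\int_\Omega \phi(X) dQ - \phi(\int_\Omega X dQ)$, equals $0$; this is a genuine, finite real number, since $\int_\Omega X dQ$ lies in $I$ (as in the proof of the theorem, an endpoint of $I$ outside $I$ is impossible because it would force $X$ to be $Q$-a.e. equal to it), so $\phi(\int_\Omega X dQ)\in\mathbb{R}$, and by hypothesis the integral equals it. Consequently, whether $\|dP/dQ\|_\infty$ is finite or infinite, the right-hand side of (\ref{refJenRHS}) equals $\|dP/dQ\|_\infty\cdot 0 = 0$ by the measure-theoretic convention $\infty\cdot 0 = 0$ adopted in the paper.

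Next I would combine this with the classical Jensen inequality on $(\Omega,\mathcal{A},P)$. Taking a supporting line $\ell(t) = \alpha t + \beta$ of $\phi$ at the point $\int_\Omega X dP$ (which again lies in $I$), one has $(\phi(X))^- \le |\alpha||X| + |\beta| \in L^1(P)$ because $X\in L^1(P)$, so $\int_\Omega \phi(X) dP$ is well-defined in $(-\infty,\infty]$ and Jensen gives $\int_\Omega \phi(X) dP - \phi(\int_\Omega X dP) \ge 0$. Theorem \ref{DraJen} supplies the reverse inequality $\le 0$. Therefore $\int_\Omega \phi(X) dP - \phi(\int_\Omega X dP) = 0$, which is the assertion.

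I do not expect any real obstacle: the statement is already contained in Theorem \ref{DraJen} together with the $\infty\cdot 0 = 0$ convention, and the only care needed is the well-definedness of the integral $\int_\Omega\phi(X)dP$ and the location of $\int_\Omega X dP$ relative to $I$, both of which are routine. If one instead wanted the self-contained argument (the route taken inside the proof of the theorem), the single substantive step would be the truncation: set $dP_n := \min\{dP/dQ, n\}\,dQ$, normalize it to a probability, apply the bounded-derivative case of Dragomir-Jensen to force equality in Jensen for each $P_n$, and then pass to the limit using the monotone convergence theorem for $P_n(\Omega)$, for $\int X^\pm dP_n$ and for $\int (\phi(X))^\pm dP_n$, together with continuity of $\phi$ on the interior of $I$; the lone point deserving separate mention is the case where $\int_\Omega X dP$ is an endpoint of $I$ at which $\phi$ is discontinuous, handled by noting that then $X$ is $P$-a.e. constant.
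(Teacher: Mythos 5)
Your proposal is correct and is essentially the paper's own argument: the hypothesis makes the right-hand side of (\ref{refJenRHS}) equal to $\left\|\frac{dP}{dQ}\right\|_\infty\cdot 0=0$ (using the stated convention $\infty\cdot 0=0$), while Jensen's inequality on $(\Omega,\mathcal{A},P)$ gives the reverse bound $\int_\Omega \phi(X)\,dP-\phi\left(\int_\Omega X\,dP\right)\ge 0$, forcing equality. The extra care you take about the well-definedness of $\int_\Omega\phi(X)\,dP$ and the endpoint case, as well as the alternative truncation route, match the details already handled inside the paper's proof of Theorem \ref{DraJen}.
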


\begin{proof} By (\ref{refJenRHS}), since $\int_{\Omega}  \phi (X) d P -  \phi \left(\int_{\Omega} X d P\right) \ge 0$.
\end{proof}

We state next the version of Theorem \ref{DraJen} (where for simplicity we omit
the analogous equality conditions)  that provides a lower bound
(instead of an upper bound)  using the essential infimum of the Radon-Nykodim derivative  (instead
of the essential supremum). Recall that if $P$ and $Q$ are mutually
absolutely continuous and  $h$ is any representative of $\frac{d P}{d Q}$,
then $1/h$ is a representative of $\frac{d Q}{d P}$. 

\begin{corollary}\label{DraJen2} Let $(\Omega, \mathcal{A})$ be a measurable space, let $P$ and $Q$ 
be  probability measures defined on $(\Omega, \mathcal{A})$
such that $P <<Q$, let $X:\Omega\to\mathbb{R}$ be integrable both with respect to $P$ and $Q$,
and let the real valued function
 $\phi$ be convex in some  interval 
containing the range of $X$.
Then
\begin{equation}\label{refJenLHS} 
\left(\operatorname{ess\  inf} \frac{d P}{d Q}\right)\left(\int_{\Omega}  \phi (X) d Q -  \phi \left(\int_{\Omega} X d Q\right)\right)
\le \int_{\Omega}  \phi (X) d P -  \phi \left(\int_{\Omega} X d P\right).
\end{equation}
\end{corollary}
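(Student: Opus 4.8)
The plan is to reduce Corollary~\ref{DraJen2} to Theorem~\ref{DraJen} by interchanging the roles of $P$ and $Q$. First I would dispose of the degenerate case $\operatorname{ess\ inf}\frac{dP}{dQ}=0$: here the left-hand side of (\ref{refJenLHS}) equals $0$ (using the convention $0\cdot\infty=0$, noting that the factor $\int_{\Omega}\phi(X)\,dQ-\phi(\int_{\Omega}X\,dQ)$ is a well-defined element of $[0,\infty]$ by Jensen's inequality together with the finiteness of $\phi$ at the interior point $\int_\Omega X\,dQ$), while the right-hand side is $\ge 0$, again by Jensen's inequality; hence (\ref{refJenLHS}) holds trivially.

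Next, assume $m:=\operatorname{ess\ inf}\frac{dP}{dQ}>0$. Then $\frac{dP}{dQ}\ge m>0$ $Q$-a.e., so if $P(B)=0$ we get $0=\int_B\frac{dP}{dQ}\,dQ\ge m\,Q(B)$, which forces $Q(B)=0$; thus $Q<<P$ and $P$, $Q$ are mutually absolutely continuous. As recalled just before the statement, $1/\frac{dP}{dQ}$ is then a representative of $\frac{dQ}{dP}$, and since $P$-null and $Q$-null sets now coincide we obtain $\left\|\frac{dQ}{dP}\right\|_\infty=\big(\operatorname{ess\ inf}\frac{dP}{dQ}\big)^{-1}=1/m$.

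Now I would apply Theorem~\ref{DraJen} with $P$ and $Q$ exchanged, which is legitimate since $Q<<P$ and $X$ is integrable with respect to both measures; this gives
\begin{equation*}
\int_{\Omega}\phi(X)\,dQ-\phi\left(\int_{\Omega}X\,dQ\right)\le \left\|\frac{dQ}{dP}\right\|_\infty\left(\int_{\Omega}\phi(X)\,dP-\phi\left(\int_{\Omega}X\,dP\right)\right)=\frac{1}{m}\left(\int_{\Omega}\phi(X)\,dP-\phi\left(\int_{\Omega}X\,dP\right)\right).
\end{equation*}
Multiplying both sides by $m>0$ yields (\ref{refJenLHS}), the usual conventions covering the case where the right-hand side above is $+\infty$ (so that $m^{-1}\cdot\infty=\infty$).

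The only point that requires genuine care is the bookkeeping of the measure-theoretic conventions and of which null sets are in force when one passes from an essential infimum taken with respect to $Q$ to the essential supremum of the reciprocal taken with respect to $P$; the mutual absolute continuity established in the second paragraph is precisely what makes this transition harmless. Everything else is a direct quotation of Theorem~\ref{DraJen} and of Jensen's inequality.
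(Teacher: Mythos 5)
Your proof is correct and follows essentially the same route as the paper: dispose of the case $\operatorname{ess\ inf}\frac{dP}{dQ}=0$ as a restatement of Jensen, and otherwise observe that $Q<<P$ with $\left\|\frac{dQ}{dP}\right\|_\infty=\left(\operatorname{ess\ inf}\frac{dP}{dQ}\right)^{-1}<\infty$ and apply Theorem \ref{DraJen} with the roles of $P$ and $Q$ interchanged. You merely spell out the mutual absolute continuity and the reciprocal identity in more detail than the paper does.
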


\begin{proof} If  $
\operatorname{ess\  inf} \frac{d P}{d Q}
= 0$, then  inequality (\ref{refJenLHS})  reduces to the usual Jensen inequality,
so only when 
$
\operatorname{ess\  inf} \frac{d P}{d Q}
> 0$ does (\ref{refJenLHS}) say something new. But in this case
we have $Q <<P$, with $\frac{d Q}{d P} = \frac{1}{d P/d Q}$ a bounded function, since 
$\left\|\frac{dQ}{dP}\right\|_\infty = \frac{1}{\operatorname{ess\ inf} \frac{d P}{d Q}}< \infty$. Now inequality  (\ref{refJenLHS}) follows from (\ref{refJenRHS}):  Multiply both sides of (\ref{refJenLHS})
by $\left\|\frac{dQ}{dP}\right\|_\infty$, and 
note that this is just (\ref{refJenRHS}) with the roles of $P$ and $Q$ interchanged.
\end{proof}

Since in the nontrivial case 
$
\operatorname{ess\  inf} \frac{d P}{d Q}
> 0$
the preceding corollary reduces to Theorem \ref{DraJen}, the
corresponding equality conditions follow automatically.

If $\phi$ is  concave, then applying Theorem \ref{DraJen}
and Corollary \ref{DraJen2} to $-\phi$ we obtain the following

\begin{corollary}\label{DraJen3} Let $(\Omega, \mathcal{A})$ be a measurable space, let $P$ and $Q$ 
be  probability measures defined on $(\Omega, \mathcal{A})$
such that $P <<Q$, let $X:\Omega\to\mathbb{R}$ be integrable both with respect to $P$ and $Q$,
and let the real valued function
 $\phi$ be concave in some  interval 
containing the range of $X$.
Then
\begin{equation}  
\left(\operatorname{ess\  inf} \frac{d P}{d Q}\right)\left(
\phi \left(\int_{\Omega} X d Q\right) - \int_{\Omega}  \phi (X) d Q\right)
\le 
\end{equation}
\begin{equation}
\phi \left(\int_{\Omega} X d P\right)
 - \int_{\Omega}  \phi (X) d P  
\le
\left\|\frac{dP}{dQ}\right\|_\infty
\left(\phi \left(\int_{\Omega} X d Q\right) - \int_{\Omega}  \phi (X) 
d Q\right).
\end{equation}
\end{corollary}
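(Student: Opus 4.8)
The plan is to deduce the statement directly from Theorem~\ref{DraJen} and Corollary~\ref{DraJen2} by passing to the convex function $\psi := -\phi$. Since $\phi$ is concave on an interval containing the range of $X$, the function $\psi$ is convex on that same interval, and $X$ remains integrable with respect to both $P$ and $Q$, so all the hypotheses of Theorem~\ref{DraJen} and of Corollary~\ref{DraJen2} are met with $\psi$ in place of $\phi$; note that no strict convexity is needed, since only the inequalities (\ref{refJenRHS}) and (\ref{refJenLHS}) will be used, not their equality cases. Because $\psi$ is convex, $\psi(X)$ is bounded below by an affine function of $X$, so $\int_{\Omega}\psi(X)\,dP$ and $\int_{\Omega}\psi(X)\,dQ$ are well defined in $(-\infty,+\infty]$; equivalently, $\int_{\Omega}\phi(X)\,dP$ and $\int_{\Omega}\phi(X)\,dQ$ are well defined in $[-\infty,+\infty)$, and the two ``spreads'' $\phi\left(\int_{\Omega}X\,dP\right)-\int_{\Omega}\phi(X)\,dP$ and $\phi\left(\int_{\Omega}X\,dQ\right)-\int_{\Omega}\phi(X)\,dQ$ are nonnegative (possibly $+\infty$) by Jensen's inequality.

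For the upper bound, I would apply inequality (\ref{refJenRHS}) of Theorem~\ref{DraJen} to $\psi$, obtaining
\begin{equation*}
\int_{\Omega}\psi(X)\,dP - \psi\left(\int_{\Omega}X\,dP\right) \le \left\|\frac{dP}{dQ}\right\|_\infty\left(\int_{\Omega}\psi(X)\,dQ - \psi\left(\int_{\Omega}X\,dQ\right)\right),
\end{equation*}
and then substitute $\psi = -\phi$ and rearrange terms; since both parenthesized spreads are nonnegative, the convention $\infty\cdot 0 = 0$ intervenes here in exactly the same way as in the statement of Theorem~\ref{DraJen}, so no extra case analysis is required, and one recovers precisely the right-hand inequality of the corollary. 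For the lower bound, I would apply inequality (\ref{refJenLHS}) of Corollary~\ref{DraJen2} to $\psi$ in the same fashion, namely
\begin{equation*}
\left(\operatorname{ess\ inf}\frac{dP}{dQ}\right)\left(\int_{\Omega}\psi(X)\,dQ - \psi\left(\int_{\Omega}X\,dQ\right)\right) \le \int_{\Omega}\psi(X)\,dP - \psi\left(\int_{\Omega}X\,dP\right),
\end{equation*}
and substituting $\psi = -\phi$ and rearranging yields the left-hand inequality.

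I expect essentially no obstacle: the entire content of the corollary is already contained in Theorem~\ref{DraJen} and Corollary~\ref{DraJen2}, and the only point needing a little care is the bookkeeping of signs, together with the possibility that $\int_{\Omega}\phi(X)\,dP$ or $\int_{\Omega}\phi(X)\,dQ$ equals $-\infty$ (which can occur since $\phi$ is concave). That case, however, is already absorbed into the $\psi$-versions of those two results, whose relevant terms then take the value $+\infty$; the displayed chains of inequalities stay meaningful, with both outer terms equal to $+\infty$. Finally, the equality conditions omitted from the statement can be obtained by the same negation trick, reading them off from the strictly convex case of Theorem~\ref{DraJen} applied to $\psi$ (which is strictly concave precisely when $\phi$ is strictly convex, i.e. when $\phi$ is strictly concave).
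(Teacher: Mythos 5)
Your proposal is correct and is exactly the paper's argument: the paper derives this corollary in one line by applying Theorem \ref{DraJen} and Corollary \ref{DraJen2} to $-\phi$, which is what you do (your extra care about sign bookkeeping and the $\infty\cdot 0$ convention is harmless). The only blemish is the final parenthetical about equality conditions, where you wrote that $-\phi$ is ``strictly concave precisely when $\phi$ is strictly convex''; you mean that $-\phi$ is strictly \emph{convex} precisely when $\phi$ is strictly concave, but since the equality conditions are omitted from the statement this does not affect the proof.
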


Next we state the corresponding refinement of the
measure-theoretic version of the arithme\-tic-geometric mean inequality
$
\exp\int_{\Omega} \log( X)  dP \le \int_{\Omega} X  dP,
$
thereby generalizing \cite[Theorem 2.1]{Al}.

\begin{corollary}\label{DraJen4} Let $(\Omega, \mathcal{A})$ be a measurable space, let $P << Q$ 
be   probability measures defined on $(\Omega, \mathcal{A})$, and let 
$X:\Omega\to [0,\infty) $ be such that $\log \Omega$ is integrable both with respect to $P$ and $Q$. 
Then
\begin{equation} 
\left(\operatorname{ess\  inf} \frac{d P}{d Q}\right)\left(
\int_{\Omega} X  dQ - \exp\int_{\Omega} \log( X)  d Q\right)
\le 
\end{equation}
\begin{equation}\label{refJenRHS2}
\int_{\Omega} X  dP - \exp\int_{\Omega} \log( X)  dP
\le
\left\|\frac{dP}{dQ}\right\|_\infty
\left(\int_{\Omega} X  dQ - \exp\int_{\Omega} \log( X)
d Q\right).
\end{equation}
\end{corollary}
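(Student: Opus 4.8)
The plan is to reduce the statement to Theorem~\ref{DraJen} and Corollary~\ref{DraJen2} by the substitution $X = e^Y$, which converts the geometric-mean term into the Jensen gap of the exponential function. First I would record that the hypothesis that $\log X$ be integrable with respect to $P$ and with respect to $Q$ forces $X > 0$ both $P$-a.e. and $Q$-a.e. (a set of positive measure on which $X = 0$ would make $\int \log X$ equal to $-\infty$, and since $P << Q$ a $Q$-null set is also $P$-null). Hence, after redefining $X$ on a $Q$-null set if necessary, the function $Y := \log X$ is measurable, real valued, and integrable with respect to both $P$ and $Q$.

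Next, set $\psi(t) := e^t$, which is strictly convex on all of $\mathbb{R}$, in particular on every interval containing the range of $Y$. Thus $(\Omega, \mathcal{A})$, the pair $P << Q$, the integrand $Y$, and the convex function $\psi$ satisfy the hypotheses of both Theorem~\ref{DraJen} and Corollary~\ref{DraJen2}. The crux is the elementary pair of identities
\begin{equation*}
\int_{\Omega} \psi(Y)\, dP = \int_{\Omega} X\, dP, \qquad \psi\left(\int_{\Omega} Y\, dP\right) = \exp\int_{\Omega} \log(X)\, dP,
\end{equation*}
together with the two analogous identities for $Q$; these remain valid, under the conventions fixed in the Introduction, even when $\int_{\Omega} X\, dP = \infty$, since Theorem~\ref{DraJen} is stated so as to allow infinite values. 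Consequently the Jensen gap $\int_{\Omega}\psi(Y)\,dP - \psi\left(\int_{\Omega} Y\,dP\right)$ equals $\int_{\Omega} X\,dP - \exp\int_{\Omega}\log(X)\,dP$, and similarly with $Q$ in place of $P$.

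It now suffices to quote the two earlier results. Applying inequality~(\ref{refJenRHS}) of Theorem~\ref{DraJen} to $\psi$ and $Y$ yields exactly the right-hand inequality~(\ref{refJenRHS2}) of the statement, while applying inequality~(\ref{refJenLHS}) of Corollary~\ref{DraJen2} to the same data yields exactly the left-hand inequality of the statement. Chaining the two completes the proof.

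There is essentially no obstacle here, since the argument is a transcription of already-established inequalities through the change of variables $X = e^Y$. The only points that require a little care are the almost-everywhere finiteness of $\log X$ (so that $Y$ is a genuine real-valued integrand to which Theorem~\ref{DraJen} applies) and the bookkeeping in the case $\int_{\Omega} X\, dP = +\infty$; both are already accommodated by the conventions and by the form in which Theorem~\ref{DraJen} is stated.
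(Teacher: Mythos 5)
Your proof is correct and is exactly the intended derivation: the paper states Corollary \ref{DraJen4} without an explicit proof, but it is clearly meant to follow by applying Theorem \ref{DraJen} and Corollary \ref{DraJen2} to $\phi(t)=e^{t}$ and $Y=\log X$ (the hypothesis ``$\log\Omega$'' being a typo for ``$\log X$''), precisely as you do. Your added remarks on the a.e.\ positivity of $X$ and on the possibility $\int_\Omega X\,dP=\infty$ are accurate and harmless.
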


Let us  finish by saying that 
 the reader interested in applications of
the Dragomir-Jensen inequality to information inequalities, 
can find some such applications (for the discrete case) in the original Dragomir's paper \cite{Dra}.

\end{document}